\newcommand{\bR}{\mathbf{R}}
\newcommand{\dd}{\mathrm{d}}
\newcommand{\sF}{\mathcal{F}}
\newcommand{\wt}{\widetilde}
\newcommand{\eil}{\overset{\text\tiny\rm{(law)}}{=}}
\theoremstyle{thm}
  \newtheorem{thm}{Theorem}
  \newtheorem{cor}[thm]{Corollary}
\theoremstyle{def}
\theoremstyle{remark}
  \newtheorem{rem}{Remark}
\title{Further studies on square-root boundaries for Bessel processes}
\author{Larbi Alili and Hiroyuki Matsumoto}
\address{L.~Alili -- Department of
Statistics, The University of Warwick, CV4 7AL, Coventry, UK.}
\email{L.Alili@warwick.ac.uk}
\address{H. ~Matsumoto -- Department of Physics and Mathematics,
Aoyama Gakuin University,
Fuchinobe 5-10-1, Chuu-ouku, Sagamihara 252-5258, Japan.}
\email{matsu@gem.aoyama.ac.jp}
\keywords{Bessel processes, exponential functionals, random affine equations, square-root boundaries}
\subjclass[2010]{Primary: 60G40, 60J65 $\;$ Secondary:  60G18, 60J60}
\date{\today}
\begin{document}
\begin{abstract}
We  look at decompositions of perpetuities and apply them to the study of the distributions of hitting times of Bessel processes of two types of square root boundaries. These distributions are linked giving a new proof of some Mellin transforms results obtained by DeLong \cite{delong-83} and Yor \cite{Yor-sqrt-bdry}. Several random factorizations and characterizations of the studied distributions are established.
\end{abstract}

\maketitle
\section{Introduction}
Let  $R:=((R_t)_{t\geqq 0}, P_x^{(\mu)})$  be a Bessel process of
index $\mu \in \mathbf{R}$, or dimension $\delta=2(1+\mu)$, started at $x$.  Assume that $R$ is killed when it hits $0$ so that the life time $\zeta$ of $R$ equals the first hitting time $T_0^R$ of 0 by $R$, i.e. $T_0^R=\inf\{ s>0; R_s=0\}$. Here and below, unless otherwise specified, we assume that $\inf\emptyset =+\infty$. Recall that if $\mu\geq 0$ then 0 is polar (entrance-not-exit) for $R$ and hence $\zeta=+\infty$ a.s. If either $-1<\mu<0$ ($0$ is non-singular) or $\mu\leq -1$ ($0$ is exit-not-entrance), see (\cite{Borodin-Salminen}, P.133),  then $\zeta<\infty$ a.s. Thus, we have
\begin{align*}
 \zeta = \left\{
  \begin{array}{l l}
    T_0^R & \quad \text{if $\mu < 0$;}\\[4pt]
    +\infty & \quad \text{if $\mu\geq 0$.}
  \end{array} \right.
\end{align*}
 Next, for $b$ and $c>0$, we consider the first hitting time{\bf s} $\sigma_{\pm}$
of the square-root boundaries $s\mapsto \sqrt{(b\pm s)/c}$,
defined by

\begin{equation}\label{def-sigma-plus}
 \sigma_{+}=\inf\{s>0\; ;\; R_s=\sqrt{(b+ s)/c}\}
\end{equation}
and
\begin{equation}\label{def-sigma-minus}
 \sigma_{-}=\inf\{s<b\; ;\; R_s=\sqrt{(b- s)/c}\}, \quad \inf{\emptyset}=b.
\end{equation}
The Mellin transform of the distribution of $\sigma_{+}$ has been computed
in \cite{Yor-sqrt-bdry} and reads, with $\mu=\pm \nu$  for $\nu>0$, as
\begin{align}
 & E^{(-\nu)} \Big[ \Big(1+\frac{\sigma_+}{b} \Big)^{-a} \Big] =
\frac{b^{\nu}\Lambda(\nu+a, \nu+1, \frac{1}{2b})}
{c^{-\nu}\Lambda(\nu+a, \nu+1, \frac{1}{2c})}  \label{1e:by-yor-minus} \\
\intertext{and}
 & E^{(\nu)} \Big[  \Big(1+\frac{\sigma_+}{b}\Big)^{-a} \Big] =
\frac{\Lambda(a, \nu+1, \frac{1}{2b})}{\Lambda(a, \nu+1, \frac{1}{2c})}
\label{1e:by-yor-plus}
\end{align}
where $E^{(\pm\nu)}:=E^{\pm\nu}_1$ is the expectation with respect to $P_1^{(\pm\nu)}$ and
$\Lambda$ is the confluent hypergeometric function $\Psi$ when $b<c$
and is $\Phi$ when $b>c$; $\Psi$ and $\Phi$ have the integral representations
\begin{equation}\label{Int-Psi}
 \Psi(\alpha, \beta, z)=\frac{1}{\Gamma(\alpha)}
\int_0^{\infty} e^{-zt} t^{\alpha-1} (1+t)^{\beta-\alpha-1} \dd t,
\qquad \alpha>0,
\end{equation}
and
\begin{equation}\label{Int-Phi}
  \Phi(\alpha,\beta;z) =
\frac{\Gamma(\beta)}{\Gamma(\alpha)\Gamma(\beta-\alpha)}
\int_0^1 t^{\alpha-1}(1-t)^{\beta-\alpha-1} e^{zt} \dd t,
\quad 0<\alpha<\beta,
\end{equation}
found in (\cite{Lebedev}, p.266, (9.11.1) and p.268, (9.11.6)).

Formulae \eqref{1e:by-yor-minus} and \eqref{1e:by-yor-plus} are probabilistically proved in \cite{ESY}, in case $b<c$, by using Lamperti's relation to relate $\sigma_{+}$ to the first passage times at constant levels of the diffusion
\begin{equation*} \big( e^{-2B_t^{(-\nu)}} (b+A_t^{(-\nu)}), t\geqq 0\big)
\end{equation*}
where $(B_t^{(-\nu)}, t\geqq 0)$ is a Brownian motion with constant drift $-\nu$ and
\begin{equation}\label{exponential}
A_t^{(-\nu)}=\int_0^{t}e^{B_s^{(-\nu)}}\;ds,\; t>0.
\end{equation}


The aim of this article is to provide characterisations of  the distributions of $\sigma_{-}$ and $\sigma_+$ through Mellin transforms and some random factorizations. We establish the analogues of the results in \cite{ESY} for the distribution of $\sigma_{-}$. Then, exploiting ideas of \cite{alili-patie}, we give a relationship that relates the distributions of $\sigma_{-}$ and $\sigma_{+}$. This is applied to get a new proof of a result which appeared in (\cite{delong-81}, \cite{delong-83}, \cite{Yor-sqrt-bdry}) that gives the Mellin transform of the distribution of  $\sigma_{+}$  in case $b>c$ and $\delta\geqq 2$. Various random equations  and characterizations are given for $\sigma_{\pm}$ by considering separately four cases corresponding to the different possible signs of $\nu$ and $b-c$; see  (\cite{Goldie}, \cite{Kesten}) for random affine equations.  For simplicity, we worked with Bessel processes but some of our results extend readily to positive spectrally one sided self-similar Markov processes with index 2, see  Remark \ref{Remark-3}.

\section{Characterization of the distribution of $\sigma_{-}$}

Let $B^{(-\nu)}=((B^{(-\nu)}_t)_{t\geqq0}, P^{(-\nu)})$ be a Brownian motion
with constant drift $-\nu$ starting at $0$. Recall that
$A_t^{(-\nu)}$, $t\geqq 0$,  is the exponential functional defined by \eqref{exponential}.
Then, by Lamperti's relation, there exists a Bessel process $R$
starting at $1$ such that
\begin{equation}\label{Lamperti}
e^{B_t^{(-\nu)}}=R_{A_t^{(-\nu)}},\; t\geqq 0.
\end{equation}
In particular, the life time of $R$ is given by $\zeta=T_0^R=A_{\infty}^{(-\nu)}$.
In the sequel, we suppose that $\nu\geqq 0$ and $-\nu$ will always refer to a non-positive  constant drift for Brownian motions and non-positive  index for Bessel processes.
A variant of Dufresne's identity states that
\begin{equation*}
A_\infty^{(-\nu)} := \lim_{t\to\infty}A_t^{(-\nu)} \eil
\frac{1}{2\Gamma_\nu}
\end{equation*}
for a gamma random variable $\Gamma_\nu$ with parameter $\nu$. A variable $Z$ such that $Z\stackrel{(law)}{=}A_\infty^{(-\nu)}$ having the distribution
\begin{equation*}P^{(- \nu)}(Z\in dx)=\frac12\frac{1}{\Gamma(\nu)}\Big(\frac{1}{2x}\Big)^{\nu-1}e^{-\frac{1}{2x}}\; dx, x>0,
\end{equation*}
represents the present value of a perpetuity, see (\cite{dufresne}); $Z$ will always be distributed as above even when defined under $P^{(\nu)}$.
To start with,  we look at the Mellin transform of the distribution of $\sigma_{-}$ and related random factorizations.

\begin{thm} \label{proposition-b-larger-than-c}
The distribution of $\sigma_{-}$ under $P^{(-\nu)}_1$ is
characterized by the following random equations. We have
\begin{equation}\label{rae-sigmaminus}
\Big(\frac{Z}{b}-1\Big)_+\stackrel{(law)}{=}\Big(1-\frac{\sigma_{-}}{b}\Big)(\frac{Z}{c}-1)_+
\end{equation}
if $b>c$ and
\begin{equation}\label{rae-sigmaminus-b-less-c}
\Big(1-\frac{Z}{b}\Big)_{+}\stackrel{(law)}{=}\Big(1-\frac{\sigma_{-}}{b}\Big)(1-\frac{Z}{c})_+
\end{equation}
if $b<c$,
where  $(x)_{+}$ stands for the positive part of $x\in\bR$ and
the variables $\sigma_-$ and $Z$ on the right hand sides are independent.
Furthermore,  for $a<\nu$, we have
\begin{equation} \label{2e:sigma-minus-mellin}
E^{(-\nu)}\Big[ \Big( 1-\frac{\sigma_{-}}{b} \Big)^a \Big]  =
\frac{b^{-\nu}e^{-\frac{1}{2b}}\Lambda(a+1,\nu+1;\frac{1}{2b})}
{c^{-\nu}e^{-\frac{1}{2c}}\Lambda(a+1,\nu+1;\frac{1}{2c})},
\end{equation}
where, as in the previous section, $\Lambda=\Psi$ when $b<c$ and
$\Lambda=\Phi$ when $b>c$.
\end{thm}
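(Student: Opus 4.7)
The plan is to reduce $\sigma_-$ to a hitting time of an auxiliary diffusion built from $B^{(-\nu)}$ and its exponential functional, and then apply the strong Markov property to pass to a random affine identity for the perpetuity $Z=A_\infty^{(-\nu)}$. By Lamperti's relation \eqref{Lamperti}, the condition $R^{(-\nu)}_{\sigma_-}=\sqrt{(b-\sigma_-)/c}$ translates into $Y_{\tau_-}=c$, where $Y_t:=e^{-2B_t^{(-\nu)}}(b-A_t^{(-\nu)})$ and $\tau_-:=\inf\{t\ge 0:\,Y_t=c\}$, with $\sigma_-=A_{\tau_-}^{(-\nu)}$ on $\{\tau_-<\infty\}$. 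The same boundary relation rearranges to $e^{2B_{\tau_-}^{(-\nu)}}=(b-\sigma_-)/c$, which will serve as the (random but $\sigma_-$-measurable) multiplicative coefficient in the affine equation.

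The second step is to apply the strong Markov property of $B^{(-\nu)}$ at $\tau_-$: the shifted path $\widetilde B_u:=B_{\tau_-+u}^{(-\nu)}-B_{\tau_-}^{(-\nu)}$ is a Brownian motion with drift $-\nu$ independent of $\mathcal F_{\tau_-}$, so $Z':=\int_0^{\infty} e^{2\widetilde B_u}\,du$ is an independent copy of $Z$. Splitting $A_\infty^{(-\nu)}$ at time $\tau_-$ and factoring $e^{2B_{\tau_-}^{(-\nu)}}$ out of the tail integral gives, on $\{\tau_-<\infty\}$,
\[
Z=\sigma_-+\frac{b-\sigma_-}{c}\,Z',
\]
which rearranges pointwise to $Z/b-1=(1-\sigma_-/b)(Z'/c-1)$ and to $1-Z/b=(1-\sigma_-/b)(1-Z'/c)$. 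To convert these into \eqref{rae-sigmaminus} or \eqref{rae-sigmaminus-b-less-c}, one must account for $\{\tau_-=\infty\}$. A short geometric argument handles this: when $b>c$, on $\{Z>b\}$ the process $Y$ starts at $b>c$ and passes through $0$ at the time when $A^{(-\nu)}$ reaches $b$, hence $Y$ must cross $c$ and $\tau_-<\infty$; thus $\{\tau_-=\infty\}\subseteq\{Z\le b\}$. Symmetrically, $\{\tau_-=\infty\}\subseteq\{Z\ge b\}$ when $b<c$. Adopting the convention $\sigma_-=+\infty$ on $\{\tau_-=\infty\}$, the positive parts on both sides of the target identities vanish on that event, so the random equations hold $P^{(-\nu)}$-almost surely, with $Z'$ independent of $\sigma_-$.

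The final step is to extract the Mellin transform. Taking $a$-th moments and using independence of $\sigma_-$ and $Z'$ yields
\[
E^{(-\nu)}\!\bigl[(1-\sigma_-/b)_+^{a}\bigr]=\frac{E\bigl[(Z/b-1)_+^{a}\bigr]}{E\bigl[(Z/c-1)_+^{a}\bigr]}\;\;(b>c),\qquad\text{resp.}\;\;\frac{E\bigl[(1-Z/b)_+^{a}\bigr]}{E\bigl[(1-Z/c)_+^{a}\bigr]}\;\;(b<c).
\]
Using the density $f_Z(z)=e^{-1/(2z)}/(2^{\nu}\Gamma(\nu)z^{\nu+1})$ on $(0,\infty)$, the change of variable $u=z/b$ followed by $v=1/u-1$ turns $E[(1-Z/b)_+^{a}]$ into the $\Psi$-integral \eqref{Int-Psi} at parameters $(a+1,\nu+1,1/(2b))$, with prefactor $\Gamma(a+1)e^{-1/(2b)}/((2b)^{\nu}\Gamma(\nu))$; the change $y=1/(2z)$ in $E[(Z/b-1)_+^{a}]$ brings it to the $\Phi$-integral \eqref{Int-Phi} at $(\nu-a,\nu+1,-1/(2b))$, and Kummer's transformation $\Phi(\alpha,\beta,z)=e^{z}\Phi(\beta-\alpha,\beta,-z)$ converts this to $e^{-1/(2b)}\Phi(a+1,\nu+1,1/(2b))$. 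In each case the prefactors combine across the ratio to reproduce exactly \eqref{2e:sigma-minus-mellin}. Since $(1-\sigma_-/b)_+\in[0,1]$, its law is determined by its moments, which establishes the characterization. The main obstacle is the bookkeeping on $\{\tau_-=\infty\}$ described above; once its inclusion in $\{Z\le b\}$ (resp.\ $\{Z\ge b\}$) is verified, the remaining steps are direct integral evaluations.
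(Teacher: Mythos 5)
Your proposal is correct and follows essentially the same route as the paper: time-change $\sigma_-$ into the hitting time of $c$ by $e^{-2B_t^{(-\nu)}}(b-A_t^{(-\nu)})$, split $A_\infty^{(-\nu)}$ at that time via the strong Markov property to get the affine identity (with the positive parts absorbing the event $\{\tau=\infty\}$, which you correctly show forces $Z\leqq b$ when $b>c$ and $Z\geqq b$ when $b<c$), and then evaluate the resulting ratio of Mellin transforms of $Z$ using the gamma density and the integral representations of $\Phi$ and $\Psi$. Your extra details (the explicit use of Kummer's transformation and the moment-determinacy of a $[0,1]$-valued variable) only make explicit what the paper leaves implicit.
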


\begin{proof} Let us prove (\ref{rae-sigmaminus}) and (\ref{rae-sigmaminus-b-less-c}).
Set

\begin{eqnarray*}
\tau_{b, c} := \inf\{s>0\; ;\; e^{-2B_s^{(-\nu)}}(b-A_s^{(-\nu)})=c\}, \quad \inf \emptyset=C_b^{(-\nu)},
\end{eqnarray*}
 where $C^{(-\nu)}_t:=\int_0^tR_s^{-2}\, ds$ stands for the inverse of $A_t^{(-\nu)}$, $t>0$. Write simply $\tau$ for $\tau_{b,c}$ where there is no risk of confusion. Because
\begin{eqnarray*}
\tau=\inf\Big\{s>0\; ;\; (R_{A_s^{(-\nu)}})^2=
\frac{b-A_s^{(-\nu)}}{c} \Big\},
\end{eqnarray*}
we see that $A_{\tau}^{(-\nu)}=\sigma_{-}$. Next, on the event $\tau<C_{b}^{(-\nu)}$,  we have
\begin{align}\label{intermediate}
A_\infty^{(-\nu)} - b  = A_\tau^{(-\nu)} + e^{2B_\tau^{(-\nu)}}(\wt{Z}-c)
+ c e^{2B_\tau^{(-\nu)}} - b
  = e^{2B_\tau^{(-\nu)}}(\wt{Z}-c)
\end{align}
where
$
\wt{Z} := \int_0^\infty e^{2(B_{\tau+s}^{(-\nu)}-B_\tau^{(-\nu)})} \dd s$. Observe that, by the strong Markov property, $\wt{Z}$ is distributed as $(2\Gamma_\nu)^{-1}$ and
is independent of $(B_s^{(-\nu)}, s\leq \tau)$.\\

Suppose that $b>c$. On the event $\{\tau=C_b^{(-\nu)}\}$,  we have $
b -A_s^{(-\nu)} > c e^{2B_s^{(-\nu)}}$, $s>0$, and hence, since $B^{(-\nu)}$ drifts to $-\infty$, by letting $s$ tend to $+\infty$, we get
 that $A_\infty^{(-\nu)}\leqq b$. Thus,  by using (\ref{intermediate}) and the fact that $\{\tau<C_b^{(-\nu)}\}=\{\sigma_{-}<b \}$, we obtain
 \begin{eqnarray*}
(A_\infty^{(-\nu)}-b)_{+} &=& (A_\infty^{(-\nu)}-b)_{+}{\bf 1}_{\{\tau<C_b^{(-\nu)}\}}+ (A_\infty^{(-\nu)}-b)_{+}{\bf 1}_{\{\tau=C_b^{(-\nu)}\}} \\
&=& e^{2B_\tau^{(-\nu)}}(\wt{Z}-c)_+{\bf 1}_{\{\tau<C_b^{(-\nu)}\}}\\
&=& (\frac{b-A_{\tau}^{(-\nu)}}{c})(\wt{Z}-c)_+{\bf 1}_{\{\tau<C_b^{(-\nu)}\}}\\
&=&(\frac{b-\sigma_{-}}{c})(\wt{Z}-c)_+ {\bf 1}_{\{\sigma_{-}<b\}}.
\end{eqnarray*}

The proof of (\ref{rae-sigmaminus}) is now complete because  $\sigma_{-}\leq b$ holds $P^{(-\nu)}$ a.s.\\

Suppose that $b<c$. On the event $\{\tau=C_b^{(-\nu)}\}$,  we have $
b -A_s^{(-\nu)} < c e^{2B_s^{(-\nu)}}$, $s>0$, and hence,  by letting $s$ tend to $+\infty$, we obtain $A_\infty^{(-\nu)}\geqq b$. Hence,  by using (\ref{intermediate}) and the fact that $\{\tau<C_b^{(-\nu)}\}=\{\sigma_{-}<b  \}$, we obtain
 \begin{eqnarray*}
(b-A_\infty^{(-\nu)})_{+} &=& (b-A_\infty^{(-\nu)})_{+}{\bf 1}_{\{\tau<+C_b^{(-\nu)}\}}+ (b-A_\infty^{(-\nu)})_{+}{\bf 1}_{\{\tau=C_b^{(-\nu)}\}} \\
&=& e^{2B_\tau^{(-\nu)}}(c-\wt{Z})_+{\bf 1}_{\{\tau<C_b^{(-\nu)}\}}\\
&=& (\frac{b-A_{\tau}^{(-\nu)}}{c})(c-\wt{Z})_+{\bf 1}_{\{\tau<C_b^{(-\nu)}\}}\\
&=&(\frac{b-\sigma_{-}}{c})(c-\wt{Z})_+ {\bf 1}_{\{\sigma_{-}<b\}}.
\end{eqnarray*}
The proof of (\ref{rae-sigmaminus-b-less-c}) is complete because  $\sigma_{-}\leq b$ holds $P^{(-\nu)}$ a.s.

By taking the Mellin transform of both sides of (\ref{rae-sigmaminus}) and using
\begin{equation} \label{2e:z-b} \begin{split}
E^{(-\nu)}[(Z-b)_{+}^a] & = \int_0^{\frac{1}{2b}}
\frac{1}{\Gamma(\nu)} \Big(\frac{1}{2x}-b\Big)^a x^{\nu-1} e^{-x} \dd x \\
 & = \frac{b^{a-\nu}}{2^\nu \Gamma(\nu)} e^{-\frac{1}{2b}}
\frac{\Gamma(a+1)\Gamma(\nu-a)}{\Gamma(\nu+1)}
\Phi(a+1,\nu+1;\frac{1}{2b}),
\end{split} \end{equation}
we obtain
\begin{align*}
E^{(-\nu)}\Big[ \Big( \frac{b-\sigma_{-}}{c} \Big)^a \Big]
&=\frac{E^{(-\nu)}\left[(A_\infty^{(-\nu)}-b)_{+}^a\right]}{E^{(-\nu)}\left[(A_\infty^{(-\nu)}-c)_{+}^a\right]}\\
 & = \frac{b^{a-\nu} e^{-\frac{1}{2b}} \Phi(a+1,\nu+1;\frac{1}{2b})}
{c^{a-\nu} e^{-\frac{1}{2c}} \Phi(a+1,\nu+1;\frac{1}{2c})},
\end{align*}
which implies (\ref{2e:sigma-minus-mellin}) in case $b>c$. The proof of (\ref{2e:sigma-minus-mellin}) in case $b<c$ is obtained similarly using
\begin{equation}
\label{2e:b-z}
\begin{split}
E^{(-\nu)}[(b-Z)_{+}^a] & = \int_{\frac{1}{2b}}^{\infty}
\frac{1}{\Gamma(\nu)} \Big(b-\frac{1}{2x}\Big)^a x^{\nu-1} e^{-x} \dd x \\
 & = \frac{b^{a-\nu}}{2^\nu \Gamma(\nu)} e^{-\frac{1}{2b}}
\Gamma(a+1)
\Psi(a+1,\nu+1;\frac{1}{2b}).
\end{split} \end{equation}
The proof is complete since the Mellin transform
(\ref{2e:sigma-minus-mellin}) in the interval $[0, \nu)$
characterizes the distribution of $\sigma_-$.
\end{proof}


\begin{rem} Let us note that by letting $a$ tend to $0$ in (\ref{2e:sigma-minus-mellin}), we obtain that
\begin{align}
 P^{(-\nu)}(\sigma_{-}<b) = \left\{
  \begin{array}{l l}
    \frac{b^{-\nu}e^{-\frac{1}{2b}}\int_0^\infty e^{-\frac{t}{2b}}(1+t)^{\nu-1}dt}
{c^{-\nu}e^{-\frac{1}{2c}}\int_0^\infty e^{-\frac{t}{2c}}(1+t)^{\nu-1}dt} & \quad \text{if $b < c$;} \label{eq1}\\[4pt]
     \frac{b^{-\nu}e^{-\frac{1}{2b}}\int_0^1 e^{\frac{t}{2b}}(1-t)^{\nu-1}dt}
{c^{-\nu}e^{-\frac{1}{2c}}\int_0^1 e^{-\frac{t}{2c}}(1-t)^{\nu-1}dt}& \quad \text{if $b> c$.}
  \end{array} \right.
\end{align}
In fact, $\tau$ is the first hitting time of $c$ by the  diffusion
$\eta^{(-\nu)}=\{\eta^{(-\nu)}_t\}_{t\geqq0}$ defined by \begin{equation}\label{auxiliary-diffusion}
\eta^{(-\nu)}_t = e^{-2B_t^{(-\nu)}} (b-A_t^{(-\nu)}),\; t\geqq 0,
\end{equation}
killed when it first hits $0$. A scale function and speed measure  of $\eta^{(-\nu)}$ are  given, respectively, by
\begin{equation*}
s(x)=\int_1^x y^{-\nu-1} e^{-\frac{1}{2y}} \dd y, \quad m(dx)= \frac12 x^{\nu-1}e^{\frac{1}{2x}}dx, \; x\in \mathbf{R}.
\end{equation*}
The boundary $\infty$ is natural and (\ref{eq1})  is easily checked using diffusion technics (see (\cite{RY}, Proposition (3.2), P.301), since
\begin{align*}
 P^{(-\nu)}(\sigma_{-}<b) =P^{(-\nu)}(\tau<+\infty)= \left\{
  \begin{array}{l l}
    \lim_{x\rightarrow 0}\frac{s(b)-s(x)}{s(b)-s(x)} & \quad \text{if $b < c$;}\\[4pt]
     \lim_{x\rightarrow \infty}\frac{s(x)-s(b)}{s(x)-s(c)}& \quad \text{if $b> c$,}
  \end{array} \right.
\end{align*}
with
\begin{equation*}
b^{-\nu}e^{-\frac{1}{2b}}\int_0^\infty e^{-\frac{t}{2b}}(1+t)^{\nu-1}dt
=\int_0^b \xi^{-\nu-1}e^{-\frac{1}{2\xi}}d\xi=s(b)-s(0)
\end{equation*}
and
\begin{equation*}
b^{-\nu}e^{-\frac{1}{2b}}\int_0^1 e^{\frac{t}{2b}}(1-t)^{\nu-1}dt
=\int_b^\infty \xi^{-\nu-1}e^{-\frac{1}{2\xi}}d\xi =\lim_{x\rightarrow \infty}(s(x)-s(b)).
\end{equation*}
\end{rem}

For completeness, we provide an explanation of what happens when we let $c$ tend to $0$ or $\infty$ in the equalities in distribution (\ref{rae-sigmaminus}) and (\ref{rae-sigmaminus-b-less-c}). For convenience, we write $\tau(c)$ and $\sigma_{-}(c)$, respectively, for $\tau$ and $\sigma_{-}$.

\begin{cor} \label{Verification-Limit} The convergence $\sigma_{-}(c)\rightarrow b\wedge \zeta$ holds $P^{(-\nu)}$ a.s. as $c\rightarrow \infty$.  Furthermore,
$(b-\sigma_{-})/c\rightarrow  R_{b}^2$ holds $P^{(-\nu)}$ a.s. as $c\rightarrow 0$.  As a consequence, there is the following identity in distribution
\begin{eqnarray}\label{By-product}
(Z-b)_+\stackrel{(law)}{=} ZR_{b}^2
\end{eqnarray}
where $Z$ is independent of $R$.
\end{cor}

\begin{proof}  As $c\rightarrow \infty$,  we have $
 \sigma_{-}(c)\rightarrow \inf\{s<b\; ;\; R_s=0\}=b\wedge \zeta$ since $\inf \emptyset=b$ in this case.  Next, because $\sigma_{-}(c)=A^{(-\nu)}_{\tau(c)}$ and  $\tau(c)\rightarrow C^{(-\nu)}_b$, we get that  $\sigma_{-}(c)\rightarrow b$ a.s., as $c\rightarrow 0$. Hence by continuity of $R$, we get that $R_{\sigma_{-}(c)}\rightarrow R_{b}$ a.s.
Formula (\ref{By-product}) follows from  (\ref{rae-sigmaminus}) by letting $c$ tend to 0.
\end{proof}
\begin{rem}
We present here yet another way of proving (\ref{By-product}). Recall that the semi-group of a Bessel process of index $-\nu$, $\nu>0$, when $0$ is a killing boundary,  is given by
\[p_t^{(-\nu)}(x,dy)=\frac{y}{t}\left(\frac{y}{x} \right)^{-\nu}e^{-\frac{x^2+y^2}{2t}}I_{\nu}(\frac{xy}{t})dy,
\]
see for example \cite{Borodin-Salminen}.
By using the expansion
\begin{equation}\label{Expansion} I_{\nu}(z)= \sum_{k=0}^{\infty} \frac{(z/2)^{\nu+2k}}{\Gamma(k+1)\Gamma(k+\nu+1)},
\end{equation}
found in (\cite{Lebedev}, P.108),
we can evaluate the positive real moments of $R_{b\wedge \zeta}^2$ to get
\begin{equation}\label{Moments-Bessel}
E\left[R_{b}^{2a} \right]=(2b)^{a-\nu}e^{-\frac{1}{2b}}\frac{\Gamma(a+1)}{\Gamma(\nu+1)}\phi(a+1, \nu+1, \frac{1}{2b}), \quad a<\nu.
\end{equation}
On the other hand, it is easy to see that
\begin{equation}\label{Moments-Gamma}
E\left[Z^a \right]=2^{-a}\frac{\Gamma(\nu-a)}{\Gamma(\nu)}, \quad a<\nu.
\end{equation}
Combining (\ref{2e:z-b}), (\ref{Moments-Bessel}) and (\ref{Moments-Gamma}),  yields that the Mellin transforms of the two sides of (\ref{By-product}) are equal.
\end{rem}

\begin{rem}\label{Remark-3} The results of Theorem  \ref{proposition-b-larger-than-c} and Corollary \ref{Verification-Limit} extend to positive spectrally one sided (spectrally negative when $b>c$ and spectrally positive when $b<c$) self-similar Markov processes with index 2. That is, we replace $B^{(-\nu)}$ by a L\'evy process $\xi:=(\xi_t, t\geq 0)$. Defining $A_{t}(\xi)=\int_0^te^{\xi_s}\, ds$, $t\geq 0$, we know that $A_{+\infty}(\xi)<\infty$ a.s. if and only if $\lim_{t\rightarrow \infty}t^{-1}\xi_t:=-\nu<0$, see (\cite{Bertoin-Yor-05}, Thm 1). Next, we replace  $R$ by the self-similar image $X$ of $\xi$ by the Lamperti transform (\ref{Lamperti}). Assuming that $b>c$ and $\xi$ is spectrally negative (thus $X$ also does not have positive jumps), consider the first hitting time $\sigma_{-}(X)$
 of the square-root boundary $s\mapsto \sqrt{(b- s)/c}$ with $\inf{\emptyset}=b\wedge\zeta(X)$ where  $\zeta(X)$ is the lifetime of $X$. Repeating the arguments of the proof of (\ref{rae-sigmaminus}), we see that an identity of that type holds, where $\sigma_{-}$ and $Z$ are replaced by $\sigma_{-}(X)$ and a copy $\tilde{A}_{\infty}(\xi)$ of $A_{\infty}(\xi)$ which is independent of $(\xi_s, s\geq 0)$, respectively.
By letting $c$ tends to $0$ (the trajectories of $X$ are not continuous but we can still get the limits in distribution),  we obtain the following generalization of (\ref{By-product}),
\begin{eqnarray}\label{By-product-general}
(A_{\infty}(\xi)-b)_+\stackrel{(law)}{=} X_{b\wedge \zeta(X)}^2 \tilde{A}_{\infty}(\xi).
\end{eqnarray}
  In the same spirit of the special functions introduced in \cite{Patie-09}, the analogue of (\ref{2e:z-b}) gives an extension of the confluent hypergeometric function $\Phi$ for non local type operators. The factorization of (\ref{By-product-general}) is of  different type than  the factorizations of the exponential distribution discovered in \cite{Bertoin-Yor-01}.  The study of these identities in distribution and their consequences, in the jumping setting, is a an interesting future research project.
\end{rem}
Recall the absolute continuity of the probability laws $P^{(\pm\nu)}$ of
 Bessel processes
\begin{equation} \label{2e:abs-conti}
\dd P_x^{(\nu)}\Big|_{\sF_t} =
\Big( \frac{R_{t\wedge\sigma_{-}}}{x} \Big)^{2\nu}
\dd P_x^{(-\nu)}\Big|_{\sF_t}
\end{equation}
found for instance in \cite{RY}. It follows, by cobining (\ref{2e:sigma-minus-mellin}) and (\ref{2e:abs-conti}), that
\begin{align*}
E^{(\nu)}\Big[\Big( 1-\frac{\sigma_{-}}{b} \Big)^a I_{\{\sigma_{-}<b\}}\Big]
 & = E^{(-\nu)}\Big[ (R_{\sigma_{-}})^{2\nu}
\Big( 1-\frac{\sigma_{-}}{b} \Big)^a I_{\{\sigma_{-}<b\}}\Big] \\
 & = E^{(-\nu)}\Big[ \Big( \frac{b-\sigma_{-}}{c} \Big)^{\nu}
\Big( 1-\frac{\sigma_{-}}{b} \Big)^a I_{\{\sigma_{-}<b\}}\Big] \\
 & = \frac{ e^{-\frac{1}{2b}} \Lambda(a+\nu+1,\nu+1;\frac{1}{2b})}
{ e^{-\frac{1}{2c}} \Lambda(a+\nu+1,\nu+1;\frac{1}{2c})}.
\end{align*}
It is obvious that if $b>c$ then $\sigma_{-}<\infty$ holds $P^{(\nu)}$ a.s. This is confirmed, by letting $a\downarrow 0$ in our calculations, because in this case  $\Phi(\nu+1,\nu+1;z)=e^z$.
Hence, we obtain the following result.

\begin{thm} \label{p:plus}
 For $a>0$, we have
\begin{equation}\label{eqn:theScaleFunction}
E^{(\nu)}\Big[\Big( 1-\frac{\sigma_{-}}{b} \Big)^a \Big] =
\frac{ e^{-\frac{1}{2b}} \Lambda(a+\nu+1,\nu+1;\frac{1}{2b})}
{ e^{-\frac{1}{2c}} \Lambda(a+\nu+1,\nu+1;\frac{1}{2c})},
\end{equation}
where $\Lambda=\Psi$ when $b<c$ and $\Lambda=\Phi$ when $b>c$. Note that, $P^{(\nu)}(\sigma_{-}<b)=1$ if $b>c$.
\end{thm}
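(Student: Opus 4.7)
The plan is to reduce the $P^{(\nu)}$-expectation to a $P^{(-\nu)}$-expectation (already evaluated in Theorem~\ref{proposition-b-larger-than-c}) by invoking the absolute continuity relation \eqref{2e:abs-conti}, and then to let the shift parameter tend to zero in order to identify the total mass.

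Concretely, I would first apply \eqref{2e:abs-conti} at the bounded stopping time $t \wedge \sigma_-$ and pass $t \to \infty$ to obtain
\[
E^{(\nu)}\Big[\Big(1 - \frac{\sigma_-}{b}\Big)^{a} I_{\{\sigma_-<b\}}\Big] = E^{(-\nu)}\Big[R_{\sigma_-}^{2\nu}\Big(1 - \frac{\sigma_-}{b}\Big)^{a} I_{\{\sigma_-<b\}}\Big].
\]
On $\{\sigma_-<b\}$ the definition of $\sigma_-$ gives $R_{\sigma_-}^{2\nu} = (b/c)^\nu (1-\sigma_-/b)^\nu$, so the right-hand side equals $(b/c)^\nu E^{(-\nu)}[(1-\sigma_-/b)^{a+\nu}]$. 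Plugging in \eqref{2e:sigma-minus-mellin} from Theorem~\ref{proposition-b-larger-than-c} with $a$ replaced by $a+\nu$, the prefactor $(b/c)^\nu$ cancels the ratio $b^{-\nu}/c^{-\nu}$ appearing there and produces exactly the right-hand side of \eqref{eqn:theScaleFunction} — but with the indicator $I_{\{\sigma_-<b\}}$ still present on the left.

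Next, I would remove this indicator by letting $a \downarrow 0$: on $\{\sigma_-<b\}$ the integrand $(1-\sigma_-/b)^a$ increases to $1$, so monotone convergence yields $E^{(\nu)}[(1-\sigma_-/b)^a I_{\{\sigma_-<b\}}] \to P^{(\nu)}(\sigma_-<b)$, while the right-hand side approaches the ratio of $e^{-1/(2b)}\Lambda(\nu+1,\nu+1;1/(2b))$ to its analogue at $1/(2c)$. In the case $b>c$ the confluent hypergeometric identity $\Phi(\nu+1,\nu+1;z)=e^z$, immediate from Kummer's transformation or from the power series, collapses this ratio to $1$; the analogous simplification of $\Psi(\nu+1,\nu+1;\cdot)$ handles the case $b<c$. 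Hence $P^{(\nu)}(\sigma_-<b)=1$, and the indicator can be dropped from the identity derived above to yield \eqref{eqn:theScaleFunction} for every $a>0$.

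The step that requires the most care is the justification of letting $t \to \infty$ in the change of measure, i.e., the uniform integrability of the nonnegative local martingale $(R_{t\wedge\sigma_-})^{2\nu}$; the natural strategy is to truncate at $\sigma_- \wedge t$, apply dominated convergence, and check integrability via the Mellin bound already produced in Theorem~\ref{proposition-b-larger-than-c}. A secondary but essential point is the reduction of $\Lambda(\nu+1,\nu+1;z)$ to a multiple of $e^z$ needed to close the $a\downarrow 0$ limit: this is transparent for $\Phi$ but must be verified separately (and slightly more carefully) for $\Psi$ in the sub-case $b<c$.
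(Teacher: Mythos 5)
Your proposal is correct and follows essentially the same route as the paper: the change of measure \eqref{2e:abs-conti}, the substitution $R_{\sigma_-}^{2\nu}=\big(\frac{b-\sigma_-}{c}\big)^{\nu}=(b/c)^{\nu}\big(1-\frac{\sigma_-}{b}\big)^{\nu}$ on $\{\sigma_-<b\}$, an appeal to \eqref{2e:sigma-minus-mellin} with exponent shifted to $a+\nu$, and the limit $a\downarrow 0$ via $\Phi(\nu+1,\nu+1;z)=e^{z}$. The only difference is that you explicitly flag two points the paper passes over in silence (the uniform-integrability justification for letting $t\to\infty$ in the change of measure, and the separate treatment of the $\Psi$ case for $b<c$), which is a welcome refinement rather than a departure.
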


\section{Characterization of the distribution of $\sigma_{+}$ reviewed}
Our aim is to establish a connection between the distributions of $\sigma_{-}$ and $\sigma_{+}$. We do this by combining our results for $\sigma_{-}$ with results in  \cite{alili-patie}. Then, this is applied to give a new probabilistic proof of formulae  \eqref{1e:by-yor-minus} and \eqref{1e:by-yor-plus}  in case $b>c$, which case was not dealt with in \cite{ESY}.
\begin{thm} \label{t:minus} The probability distributions of $\sigma_{-}$ and $\sigma_+$ under $P^{(\nu)}$ are related as follows
\begin{equation}\label{sigma-plus-sigma-minus}
P^{(\nu)}\Big(\frac{b}{b+\sigma_+}\in dt \Big)= t^{-\nu-1} e^{\frac{1}{2b}-\frac{1}{2c}}P^{(\nu)}\Big(\big(1-\frac{\sigma_-}{b}\big)\in dt \Big), \; t<1.
\end{equation}
As a consequence, formulae \eqref{1e:by-yor-minus} and \eqref{1e:by-yor-plus}   hold true.
\end{thm}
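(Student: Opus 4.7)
The plan is to first derive a random factorization for $\sigma_+$ under $P^{(-\nu)}$ in the spirit of Theorem~\ref{proposition-b-larger-than-c}, extract from it the Mellin transforms \eqref{1e:by-yor-minus} and \eqref{1e:by-yor-plus}, and finally read off \eqref{sigma-plus-sigma-minus} by matching Mellin transforms on $(0,1)$ with the help of Theorem~\ref{p:plus}.

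For the analog of Theorem~\ref{proposition-b-larger-than-c}, I would introduce the companion diffusion
$$\zeta^{(-\nu)}_t = e^{-2B^{(-\nu)}_t}\bigl(b+A^{(-\nu)}_t\bigr),\qquad t\geq 0,$$
and let $\tau_+=\inf\{t>0 : \zeta^{(-\nu)}_t = c\}$, so that $A^{(-\nu)}_{\tau_+}=\sigma_+$ by Lamperti's relation. Under $P^{(-\nu)}$ one has $B^{(-\nu)}_t\to -\infty$ and $A^{(-\nu)}_t\to Z<\infty$ almost surely, hence $\zeta^{(-\nu)}_t\to\infty$ and $\tau_+<\infty$ almost surely, regardless of the sign of $b-c$. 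Splitting $A^{(-\nu)}_\infty$ at $\tau_+$ and applying the strong Markov property exactly as in Theorem~\ref{proposition-b-larger-than-c} then yields
$$Z+b \,\stackrel{(law)}{=}\, \frac{b+\sigma_+}{c}(\wt Z + c),$$
with $\sigma_+$ and $\wt Z\stackrel{(law)}{=}Z$ independent under $P^{(-\nu)}$. Taking $(-a)$-th moments and evaluating $E^{(-\nu)}[(Z+\beta)^{-a}]$ in the manner of \eqref{2e:z-b}-\eqref{2e:b-z} (producing $\Psi$ when $b<c$ and $\Phi$ when $b>c$) gives \eqref{1e:by-yor-minus}. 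Transporting to $P^{(\nu)}$ via \eqref{2e:abs-conti} at $\sigma_+$, together with $R_{\sigma_+}^2=(b+\sigma_+)/c$, shifts the first parameter of the confluent hypergeometric function by $\nu$ and gives \eqref{1e:by-yor-plus}; letting $a\downarrow 0$ along the way confirms that $\sigma_+<\infty$ $P^{(\nu)}$-a.s.

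Finally, \eqref{sigma-plus-sigma-minus} follows by matching Mellin transforms on $(0,1)$. Substituting $a\mapsto a-\nu-1$ in \eqref{eqn:theScaleFunction} yields
$$E^{(\nu)}\!\bigl[(1-\sigma_-/b)^{a-\nu-1}\bigr] = e^{1/(2c)-1/(2b)}\,\frac{\Lambda(a,\nu+1;1/(2b))}{\Lambda(a,\nu+1;1/(2c))},$$
which, combined with \eqref{1e:by-yor-plus}, gives
$$E^{(\nu)}\!\bigl[(b/(b+\sigma_+))^a\bigr] = e^{1/(2b)-1/(2c)}\,E^{(\nu)}\!\bigl[(1-\sigma_-/b)^{a-\nu-1}\bigr].$$
Since Mellin transforms determine probability measures supported on $(0,1)$, this is the content of \eqref{sigma-plus-sigma-minus}.

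The main obstacle I expect is the optional-stopping step used to push the Mellin transform from $P^{(-\nu)}$ to $P^{(\nu)}$ through \eqref{2e:abs-conti} at the unbounded stopping time $\sigma_+$; I would handle it by first stopping at $\sigma_+\wedge n$ and passing to the limit, justified by the explicit moment bounds afforded by the confluent hypergeometric representation just obtained. The transposition of the proof of Theorem~\ref{proposition-b-larger-than-c} to $\zeta^{(-\nu)}$ is actually a mild simplification, since $b+A^{(-\nu)}_t>0$ removes the need to treat the branch $\tau=\infty$ separately, as was required in the minus case.
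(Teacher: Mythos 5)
There is a genuine gap, and it sits exactly at the case the theorem is designed to cover. Your argument hinges on the claim that $\tau_+=\inf\{t>0:e^{-2B^{(-\nu)}_t}(b+A^{(-\nu)}_t)=c\}$ is almost surely finite ``regardless of the sign of $b-c$.'' This is false for $b>c$: the diffusion $\zeta^{(-\nu)}_t=e^{-2B^{(-\nu)}_t}(b+A^{(-\nu)}_t)$ starts at $b>c$ and is transient towards $+\infty$ (its scale density is $x^{-\nu-1}e^{1/(2x)}$, which is integrable at $+\infty$ for $\nu>0$), so it avoids the level $c$ with positive probability; equivalently $P^{(-\nu)}(\sigma_+=\infty)>0$ when $b>c$. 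Consequently the splitting of $A^{(-\nu)}_\infty$ at $\tau_+$ only produces the identity $Z+b=\frac{b+\sigma_+}{c}(\wt Z+c)$ \emph{on the event} $\{\sigma_+<\infty\}$, and restricting to that event biases the law of $A^{(-\nu)}_\infty$ appearing on the left (the event is not independent of $A^{(-\nu)}_\infty$). There is also no analogue of the device used in Theorem \ref{proposition-b-larger-than-c} of setting $e^{B^{(-\nu)}_\tau}=0$ on $\{\tau=\infty\}$, since nothing on the right-hand side vanishes there. This is precisely why the paper's Theorem 4(2) must state the $b>c$ factorization \eqref{Identity-sigma-plus} in terms of the conditioned variable $\sigma_+^*$ and the size-biased variables $Z(b),Z(c)$ rather than in the clean form you write. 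So your route recovers \eqref{1e:by-yor-minus} only for $b<c$ (which is the known ESY argument, cf.\ \eqref{random-affine-sigma-plus-minus}), whereas the entire purpose of Theorem \ref{t:minus} is to obtain \eqref{1e:by-yor-minus} and \eqref{1e:by-yor-plus} for $b>c$, the case where the perpetuity-splitting approach breaks down. Your closing remark that the plus case is ``a mild simplification'' because the branch $\tau=\infty$ need not be treated is exactly backwards.

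The paper avoids this obstruction by reversing the logical order and changing the mechanism: it works under $P^{(\nu)}$ (where $\sigma_-<b$ and $\sigma_+<\infty$ hold almost surely by Theorem \ref{p:plus}) and uses the Alili--Patie space--time transform $S^{(\beta)}(R)_t=(1+\beta t)R_{t/(1+\beta t)}$ with $\beta=-1/b$, under which the hitting time $\sigma_-$ is carried pathwise onto $\wt\sigma=\sigma_+/(1+\sigma_+/b)$; this yields the density identity \eqref{sigma-plus-sigma-minus} first, and the Mellin transforms \eqref{1e:by-yor-plus} and then \eqref{1e:by-yor-minus} are deduced afterwards from \eqref{eqn:theScaleFunction} and the absolute continuity relation. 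Your final step --- matching $E^{(\nu)}[(b/(b+\sigma_+))^a]$ against $e^{1/(2b)-1/(2c)}E^{(\nu)}[(1-\sigma_-/b)^{a-\nu-1}]$ and invoking injectivity of the Mellin transform --- is sound and is essentially the paper's computation run in reverse, but it presupposes \eqref{1e:by-yor-plus} for $b>c$, which your first step does not deliver. To repair the proposal you would either need to carry out the conditioned/size-biased analysis of Theorem 4(2) and extract the Mellin transform from it, or adopt a transform of the type $S^{(-1/b)}$ as the paper does.
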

\begin{proof} We shall first prove that, for $a>\nu+1$, we have
\begin{equation}\label{random-affine-sigma-plus-minus-equivalent}
E^{(\nu)}\Big[ \Big( 1+\frac{1}{b}\sigma_+ \Big)^{-a} \Big] =
e^{\frac{1}{2b}-\frac{1}{2c}} E^{(\nu)}\Big[
\Big( 1-\frac{\sigma_{-}}{b} \Big)^{a-\nu-1} I_{\{\sigma_{-}<b\}}\Big]
\end{equation}
which, clearly, is equivalent to \eqref{sigma-plus-sigma-minus}.
We use the following relationship which is obtained from Theorem \ref{p:plus},
\begin{equation} \label{3e:start}
e^{\frac{1}{2b}-\frac{1}{c}} E^{(\nu)}\Big[
\Big( 1-\frac{\sigma_{-}}{b} \Big)^{a-\nu-1} \Big] =
\frac{\Lambda(a,\nu+1;\frac{1}{2b})}{\Lambda(a,\nu+1;\frac{1}{2c})}.
\end{equation}
\indent
To proceed, following \cite{alili-patie},
we introduce the probability measure $P_x^{(\nu,\beta)}, \beta\in\bR$, by
\begin{equation*}
\dd P_x^{(\nu,\beta)}\Big|_{\sF_t} = \frac{1}{(1+\beta t)^{\nu+1}}
e^{\frac{\beta R_t^2}{2(1+\beta t)}-\frac{\beta x^2}{2}}
\dd P_x^{(\nu)}\Big|_{\sF_{t\wedge\zeta^{(\beta)}}},
\end{equation*}
where, as before, $P_x^{(\nu)}$ is the probability law of
the Bessel processes with index $\nu$ starting from $x$ and
\begin{equation*}
\zeta^{(\beta)} = \begin{cases} 1/|\beta|, & \text{ if } \beta<0, \\
 +\infty, & \text{ if } \beta\geqq0. \end{cases}
\end{equation*}
We consider the path transform $S^{(\beta)}$ given by
\begin{equation*}
S^{(\beta)}(R)_t = (1+\beta t)R_{\frac{t}{1+\beta t}},\quad t<\zeta^{(\beta)}.
\end{equation*}
Then, it is shown in Lemma 3.4 of \cite{alili-patie} that
the induced measure of $P^{(\nu)}_x$ by $S^{(\beta)}$ is $P_x^{(\nu,\beta)}$.

We take $\beta=-\frac{1}{b}$.
Then, we have
\begin{equation*}
\frac{\beta(R_{\sigma_{-}})^2}{1+\beta\sigma_{-}} =
\frac{-\frac{1}{b} \frac{b-\sigma_{-}}{c}}{1-\frac{\sigma_{-}}{b}}
= -\frac{1}{c}
\end{equation*}
and
\begin{align*}
 e^{\frac{1}{2b}-\frac{1}{c}} E^{(\nu)}\Big[
\Big( 1-\frac{\sigma_{-}}{b} \Big)^{a-\nu-1} I_{\{\sigma_{-}<b\}} \Big]
 & = E^{(\nu)}\Big[
e^{\frac{\beta(R_{\sigma_{-}})^2}{2(1+\beta\sigma_{-})}-\frac{\beta}{2}}
\Big( 1-\frac{\sigma_{-}}{b} \Big)^{a-\nu-1} I_{\{\sigma_{-}<b\}} \Big] \\
 & = E^{(\nu,\beta)}\Big[
\Big( 1-\frac{\sigma_{-}}{b} \Big)^{a} I_{\{\sigma_{-}<b\}} \Big] .
\end{align*}
\indent
Since the probability law of $S^{(\beta)}(R^{(\nu)})$ is $P_1^{(\nu,\beta)}$,
the probability law of $\sigma_{-}$ under $P_1^{(\nu,\beta)}$ is that of
\begin{equation*}
\wt{\sigma} := \inf\Big\{s>0 \; ; \;
(1+\beta s)^2 (R_{\frac{s}{1+\beta s}})^2 =\frac{b-s}{c} \Big\}
\end{equation*}
under $P_1^{(\nu)}$ and we have
\begin{equation*}
e^{\frac{1}{2b}-\frac{1}{c}} E^{(\nu)}\Big[
\Big( 1-\frac{\sigma_{-}}{b} \Big)^{a-\nu-1} I_{\{\sigma_{-}<b\}} \Big] =
E^{(\nu)}\Big[ \Big( 1-\frac{\wt{\sigma}}{b} \Big)^a
I_{\{\sigma_{-}<b\}} \Big].
\end{equation*}
Moreover, noting $\frac{b-s}{c}=\frac{b}{c}(1+\beta s)$, we see
\begin{equation*}
\wt{\sigma} = \inf\Big\{s>0 \; ; \; (R_{\frac{s}{1+\beta s}})^2 =
\frac{b}{c}\Big( 1-\frac{\beta s}{1+\beta s} \Big) \Big\}.
\end{equation*}
Since
\begin{equation*}
\sigma_{+}  = \inf\Big\{ u>0\; ; \; (R_u)^2=\frac{b}{c}(1-\beta u)\Big\},
\end{equation*}
we obtain
\begin{equation*}
\frac{\wt{\sigma}}{1+\beta\wt{\sigma}}=\sigma_{+} \qquad \text{and} \qquad
\wt{\sigma}=\frac{\sigma_+}{1-\beta\sigma_+}=\frac{\sigma_+}{1+\frac{\sigma_+}{b}}<b.
\end{equation*}
Hence, we have
\begin{equation*} \begin{split}
 E^{(\nu)}\Big[ \Big( 1-\frac{\wt{\sigma}}{b} \Big)^a
I_{\{\wt{\sigma}<b\}} \Big] & =
E^{(\nu)}\Big[ \Big( 1+\beta \frac{\sigma}{1-\beta\sigma} \Big)^{a} \Big]\\
 & = E^{(\nu)}\Big[ \Big( 1+\frac{1}{b}\sigma \Big)^{-a} \Big].
\end{split} \end{equation*}
\noindent Thus, we have proved \eqref{random-affine-sigma-plus-minus-equivalent}. Combining this with \eqref{3e:start},
we obtain \eqref{1e:by-yor-plus}.

Formula \eqref{1e:by-yor-minus} follows from \eqref{1e:by-yor-plus}
and the absolute continuity \eqref{2e:abs-conti}.
In fact, we deduce from them
\begin{align*}
E^{(-\nu)}[(b+\sigma_{+})^{-a}] & =
E^{(\nu)}[(R_{\sigma_{+}})^{-2\nu} (b+\sigma_{+})^{-a}]\\
& =
E^{(\nu)}\Big[ \Big( \frac{b+\sigma_{+}}{c} \Big)^{-\nu}
(b+\sigma_{+})^{-a} \Big] \\
 & = c^\nu b^{-a-\nu}
\frac{\Lambda(a+\nu,\nu+1;\frac{1}{2b})}{\Lambda(a+\nu,\nu+1;\frac{1}{2c})},
\end{align*}
which is exactly \eqref{1e:by-yor-minus}.
\end{proof}
Our aim now is to establish the random factorizations satisfied by $\sigma_{+}$ under $P^{(-\nu)}$. For completeness, we include the case  $b<c$  which was  treated in \cite{ESY}, where \eqref{random-affine-sigma-plus-minus} was first proved.
\begin{thm} Let $\sigma^*_+$ be equal to $\sigma_+$ conditioned on $\sigma_+<\infty$. Then, under $P^{(-\nu)}$ we have the following identities in distribution which characterize the distribution of $\sigma_{+}$.
\begin{itemize}
\item[(1)]
 If $b<c$ then
\begin{equation}\label{random-affine-sigma-plus-minus}
1+\frac{Z}{b} \eil \big(1+\frac{\sigma_{+}}{b}\Big) \Big( 1 + \frac{Z}{c} \Big)
\end{equation}
where $\sigma_{+}$ and $Z$ on the right-hand side are independent.
\item[(2)]
If $b>c$  then
\begin{equation}\label{Identity-sigma-plus}
\frac{Z(b)}{b}-1\stackrel{(law)}{=}\Big(1+\frac{\sigma_+^*}{b}\Big)^{-1}\Big(\frac{Z(c)}{c}-1\Big)
\end{equation}
where $Z(\alpha)$, for $\alpha>0$, is a random variable with distribution
\begin{equation*}P^{(-\nu)}\Big(Z(\alpha)\in dz\Big)=\frac{{(z-\alpha)^{\nu-1}}}{E^{(-\nu)}\Big(\big(Z-\alpha\big)^{\nu-1}; Z>\alpha\Big)}P^{(-\nu)}(Z\in dz), z>\alpha,
\end{equation*}
and $Z(c)$ and $\sigma^*_{+}$ on the right hand side are independent.
\end{itemize}
\end{thm}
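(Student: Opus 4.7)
My plan is to handle the two cases separately: case~(1) admits a pathwise treatment mirroring the proof of Theorem~\ref{proposition-b-larger-than-c}, whereas case~(2) is most naturally handled at the level of Mellin transforms because $\sigma_+$ need not be finite under $P^{(-\nu)}$ when $b>c$. For case~(1), I consider the auxiliary diffusion $\xi_s := e^{-2B_s^{(-\nu)}}(b + A_s^{(-\nu)})$ and its hitting time $\tau := \inf\{s>0 : \xi_s = c\}$; by the Lamperti relation \eqref{Lamperti} one has $A_\tau^{(-\nu)} = \sigma_{+}$. Since $\xi_0 = b < c$ and $\xi_s \to +\infty$ as $s\to\infty$ (because $B_s^{(-\nu)} \to -\infty$ almost surely while $b + A_s^{(-\nu)}$ stays above $b$), continuity forces $\tau<\infty$ a.s.\ and thus $\sigma_{+}<\infty$ a.s.\ under $P^{(-\nu)}$. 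Applying the strong Markov property at $\tau$, the variable $\wt{Z} := \int_0^\infty e^{2(B_{\tau+s}^{(-\nu)} - B_\tau^{(-\nu)})}\,\dd s$ is distributed as $(2\Gamma_\nu)^{-1}$ and is independent of $\sF_\tau$. Combining the decomposition $A_\infty^{(-\nu)} = \sigma_+ + e^{2B_\tau^{(-\nu)}}\wt{Z}$ with $e^{2B_\tau^{(-\nu)}} = (b+\sigma_+)/c$ (which follows from $\xi_\tau = c$) yields $b + Z = (b+\sigma_+)(1 + \wt{Z}/c)$, which is \eqref{random-affine-sigma-plus-minus} upon dividing through by $b$. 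That this identity characterises the law of $\sigma_+$ follows because the Mellin transform of $1+\sigma_+/b$ can then be read off from the known distribution of $Z$.

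For case~(2) where $b>c$, the same diffusion $\xi$ starts above $c$ and, being transient to $+\infty$, may stay above $c$ forever; the pathwise identity above holds only on $\{\tau<\infty\}$ and does not convert cleanly to the signed statement \eqref{Identity-sigma-plus}. I will instead verify \eqref{Identity-sigma-plus} at the level of Mellin transforms. Using \eqref{2e:z-b} from the proof of Theorem~\ref{proposition-b-larger-than-c}, the definition of $Z(\alpha)$ gives, for $a<1$,
\[
E^{(-\nu)}\Big[\Big(\frac{Z(\alpha)}{\alpha}-1\Big)^a\Big] = \frac{\alpha^{-a}\,E^{(-\nu)}[(Z-\alpha)_{+}^{a+\nu-1}]}{E^{(-\nu)}[(Z-\alpha)_{+}^{\nu-1}]} = \frac{\Gamma(a+\nu)\Gamma(1-a)}{\Gamma(\nu)}\cdot\frac{\Phi(a+\nu,\nu+1;\tfrac{1}{2\alpha})}{\Phi(\nu,\nu+1;\tfrac{1}{2\alpha})}.
\]
From \eqref{1e:by-yor-minus} with $\Lambda=\Phi$ one reads off $E^{(-\nu)}[(1+\sigma_+/b)^{-a}]$, and letting $a\downarrow 0$ identifies $P^{(-\nu)}(\sigma_+<\infty)$ by dominated convergence, so that after the common prefactor cancels in the normalisation,
\[
E^{(-\nu)}\Big[\Big(1+\frac{\sigma_+^*}{b}\Big)^{-a}\Big] = \frac{\Phi(\nu+a,\nu+1;\tfrac{1}{2b})\,\Phi(\nu,\nu+1;\tfrac{1}{2c})}{\Phi(\nu+a,\nu+1;\tfrac{1}{2c})\,\Phi(\nu,\nu+1;\tfrac{1}{2b})}.
\]
Multiplying this by the Mellin transform of $Z(c)/c-1$ obtained from the previous display with $\alpha=c$ and invoking the independence on the right-hand side of \eqref{Identity-sigma-plus}, the four confluent hypergeometric factors telescope to give precisely the Mellin transform of $Z(b)/b-1$; since both sides of \eqref{Identity-sigma-plus} are positive random variables, Mellin inversion on $(0,\infty)$ yields the claimed equality in distribution.

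The only delicate step is the telescoping in case~(2): one must align four $\Phi$-factors and match the gamma prefactors while tracking the conditioning on $\{\sigma_+<\infty\}$, but the cancellation is exact. The reason for routing case~(2) through Mellin transforms rather than through a direct pathwise identity is that the size-biasing by $(z-b)^{\nu-1}$ built into the definition of $Z(b)$ has no transparent pathwise counterpart in terms of $A_t^{(-\nu)}$ and $B_t^{(-\nu)}$.
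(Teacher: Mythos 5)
Your proposal is correct, and for part (2) it is essentially the paper's own argument: both combine the Mellin formula \eqref{1e:by-yor-minus} with the computation \eqref{2e:z-b}, identify $P^{(-\nu)}(\sigma_+<\infty)$ by letting $a\downarrow0$, normalise, and conclude by injectivity of the Mellin transform; you merely write each factor out explicitly as a ratio of $\Phi$'s and check the telescoping, where the paper keeps the expectations $E^{(-\nu)}[(Z-\alpha)^{\nu+a-1};Z>\alpha]$ unreduced and divides the identity by its $a=0$ instance. The only genuine divergence is in part (1): the paper gives no proof at all, simply citing Enriquez--Sabot--Yor, whereas you reconstruct the pathwise perpetuity decomposition (the hitting time $\tau$ of level $c$ by $e^{-2B_s^{(-\nu)}}(b+A_s^{(-\nu)})$, the strong Markov splitting $A_\infty^{(-\nu)}=\sigma_++e^{2B_\tau^{(-\nu)}}\wt Z$, and $e^{2B_\tau^{(-\nu)}}=(b+\sigma_+)/c$), which is exactly the mechanism behind the paper's own Theorem~\ref{proposition-b-larger-than-c} for $\sigma_-$ and behind the cited reference. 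This makes your write-up more self-contained; the trade-off is only length. Two small points worth recording: the transience argument for $\tau<\infty$ in part (1) uses $B_s^{(-\nu)}\to-\infty$, so it needs $\nu>0$ strictly (as does Dufresne's identity itself); and in part (2) your closed form for $E^{(-\nu)}[(1+\sigma_+/b)^{-a}]$ implicitly corrects the prefactor in \eqref{1e:by-yor-minus} to $b^{-\nu}/c^{-\nu}$, consistently with the derivation at the end of the proof of Theorem~\ref{t:minus}.
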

\begin{proof} We refer to \cite {ESY} for a proof  of \eqref{random-affine-sigma-plus-minus}. To prove (\ref{Identity-sigma-plus}), observe that combining \eqref{1e:by-yor-minus} and  (\ref{2e:z-b}) we obtain for $a<1$,
\begin{eqnarray*}
E^{(-\nu)}\Big[\big(b+\sigma_+\big)^{-a}\Big]E^{(-\nu)}\Big[\big(Z-c\big)^{\nu+a-1}, Z>c\Big]=\frac{c^{\nu+a-1}}{b^{\nu+2a-1}}e^{\frac{1}{2b}-\frac{1}{2c}}E^{(-\nu)}\Big[\big(Z-b\big)^{\nu+a-1}, Z>b\Big].
\end{eqnarray*}
Now, letting $a$ tend to $0$ yields
\begin{equation*}
P\big(\sigma_{+}<\infty\big)E^{(-\nu)}\Big[\big(Z-c\big)^{\nu-1}, Z>c\Big]=\frac{c^{\nu-1}}{b^{\nu-1}}e^{\frac{1}{2b}-\frac{1}{2c}}E^{(-\nu)}\Big[\big(Z-b\big)^{\nu-1}, Z>b\Big].
\end{equation*}
Hence,
\begin{eqnarray*}
E^{(-\nu)}\Big[\big(b+\sigma_+\big)^{-a}| \sigma_{+}<\infty\Big]\frac{E^{(-\nu)}\Big[\big(Z-c\big)^{\nu+a-1}, Z>c \Big]}{E^{(-\nu)}\Big[\big(Z-c\big)^{\nu-1}, Z>c\Big]}=\frac{c^{a}}{b^{2a}}\frac{E^{(-\nu)}\Big[\big(Z-b\big)^{\nu+a-1}, Z>b\Big]}{E^{(-\nu)}\Big[\big(Z-b\big)^{\nu-1}, Z>b\Big]}.
\end{eqnarray*}
We get (\ref{Identity-sigma-plus})
by the injectivity of Mellin transform.  Equations \eqref{random-affine-sigma-plus-minus} and \eqref{Identity-sigma-plus} imply \eqref{1e:by-yor-minus} and hence characterize the distribution of $\sigma_+$.
\end{proof}

\noindent{\bf Acknowledgement.}\
We are grateful to the anonymous referees whose suggestions helped improving this paper. 
A part of this work has been done during the second author's visit
to Mathematics Institute, University of Warwick, in 2017.
He thanks Professor David Elworthy for everything on his stay.


\end{document}